\DeclareRobustCommand{\greektext}{%
  \fontencoding{LGR}\selectfont\def\encodingdefault{LGR}}
\DeclareRobustCommand{\textgreek}[1]{\leavevmode{\greektext #1}}
\numberwithin{equation}{section}
\numberwithin{figure}{section}
\theoremstyle{plain}
\newtheorem{thm}{\protect\theoremname}[section]
\theoremstyle{definition}
\newtheorem{defn}[thm]{\protect\definitionname}
\theoremstyle{plain}
\newtheorem{lem}[thm]{\protect\lemmaname}
\theoremstyle{plain}
\newtheorem{prop}[thm]{\protect\propositionname}
\providecommand{\definitionname}{Definition}
\providecommand{\lemmaname}{Lemma}
\providecommand{\propositionname}{Proposition}
\providecommand{\theoremname}{Theorem}
\begin{document}
\title[{\tiny a study on product of filter large sets and related structures}]{A STUDY ON PRODUCT OF FILTER LARGE SETS AND RELATED STRUCTURES}
\author{Jyotirmoy Poddar, Sujan Pal}
\address{{\large Department of Mathematics, Techno India University West Bengal,
Kolkata-700091, West Bengal, India.}}
\email{{\large jyotirmoy.p@technoindiaeducation.com}}
\address{{\large Department of Mathematics, University of Kalyani, Kalyani-741235,
Nadia, West Bengal, India.}}
\email{{\large sujan2016pal@gmail.com}}
\keywords{Central sets, Combinatorially large sets, Algebra of Stone-\v{C}ech
compactification of descrete semigroup, Idempotent filters.}
\begin{abstract}
Sets satisfying Central sets theorem and other Ramsey theoretic large
sets were studied extensively in literature. Hindman and Strauss proved
that product of some of these large sets is again large. In this paper
we show that if we take two combinatorially large sets along idempotent
filters, then their product is also a filter large set. Also we show
that the product of two Strongly Central sets and Thickly Central
sets are also combinatorially large. The techniques we use are majorly
combinatorial in nature.
\end{abstract}

\maketitle

\section{Introduction}

The first idea of partition in Ramsey theory we'll discuss arises
from van der Waerden's Theorem \cite{key-15}, which was published
in 1927. The theorem says that if we colour the set of natural numbers
$\mathbb{N}$ finitely, then there exists one cell which contains
arithmetic progression of arbitrary length. On the other hand Ramsey
theory deals with infinite sets. Although it originated in combinatorics,
many applications in Ramsey theory has been found in recent years
and can now be understood via the viewpoints of analysis, algebra,
and number theory. Here is the statement of Ramsey's theorem where
$\left[S\right]^{k}$ is the collection of all $k$ cardinality subsets
of the set $S$. 
\begin{thm}
(Ramsey) If we take the set of all two element subsets of $\mathbb{N}$,
i.e, $\left[\mathbb{N}\right]^{2}=\left\{ \left\{ n,m\right\} :n,m\in\mathbb{N}\right\} $
and we finitely color $\left[\mathbb{N}\right]^{2}$, then there exists
some infinite set $E\subseteq\mathbb{N}$ such that $\left[E\right]^{2}$
is monocromatic.
\end{thm}

Several concepts of largeness for subsets of a semigroup $\left(S,\cdot\right)$
will be of our interest. All of these notions are closed under passage
to supersets. Central sets have a very rich literature in Ramsey theory.
These are the large sets which satisfies Central Sets Theorem, originally
proved by Furstenberg \cite{key-5} and later its various versions
by other mathematicians. For over a hundred years there have been
many devlopements, specially towards characterization of Central sets
and Central sets theorem, firstly by Furstenberg himself and then
by Hindman, Strauss, De and many others in \cite{key-6,key-7,key-8,key-9,key-5,key-3}.
Not only Central sets but there are other combinatorially large sets
like $IP$ sets, $J$ sets and many others which mathematicians like
to study from Ramsey theoretic aspects. There are several techniques
that mathematicians use to study the large sets, namely the algebra
of Stone-\v{C}ech compactification, Ergodic theory or basic combinatorial
tools.

Making a new large set can be done in a variety of ways. It is possible
to verify if the product of two large sets is indeed a large set.
It can also be used to make new large sets out of the old ones. In
\cite{key-11} Hindman used technique of algebra to show that product
of two similar type of Ramsey theoretic large sets is again large.
Goswami gave a combinatorial proof of these facts in \cite{key-6}.
In this paper we want to take the results further for a more generalised
setting and our approach is purely combinatorial.

In the next section we give several important concepts and definitions
from the literature, namely the concepts of idempotent filters on
a semigroup which was first introduced in \cite{key-14} and then
further developments done in \cite{key-7,key-2-1}. Then in section
3 we show that filter version of syndetic sets and Piecewise syndetic
are closed under cartesian product. In the fourth we proved that the
same property also hold for filter version of $J$-sets. The product
of a few large sets in $\beta S$ those has not been looked at yet
is covered in the last section.

\section{Preliminaries}

We start with the basic definition of filters on a set $S$.
\begin{defn}
Let $S$ be any set. Let $\mathcal{U}$ be a non-empty set of subsets
of $S$. $\mathcal{U}$ is called a \emph{filter} on $S$ if it satisfies
the following properties:

\begin{enumerate}
\item If $A,\,B\in\mathcal{U}$, then $A\cap B\in\mathcal{U}$;
\item If $A\in\mathcal{U}$ and $A\subseteq B\subseteq S$, then $B\in\mathcal{U}$;
\item $\emptyset\notin\mathcal{U}$.
\end{enumerate}
\end{defn}

A classic example of a filter is the set of neighborhoods of a point
in a topological space.
\begin{defn}
An \emph{ultrafilter }on $S$ is a maximal filter on $S$. That is
an \emph{ultrafilter }on $S$ is itself a filter on $S$ which is
not contained properly in any other filter on $S$. Let $S$ be any
set, and let $a$ be an element of $S$. Then the collection of sets
each of which contains $a$ is said to be a \emph{principal }ultrafilter
corresponding to $a\in S$. In fact the principal ultrafilters are
the only ones whose members can be explicitly defined. 

We now give a brief review about the Stone-\v{C}ech compactification
of a discrete semigroup. Let $\left(S,\cdot\right)$ be any discrete
semigroup and denote its Stone-\v{C}ech compactification by $\beta S$.
$\beta S$ is the set of all ultrafilters on $S$, where the points
of $S$ are identified with the principal ultrafilters. The basis
for the topology is $\left\{ \bar{A}:A\subseteq S\right\} $, where
$\bar{A}=\left\{ p\in\beta S:A\in p\right\} $. The operation of $S$
can be extended to $\beta S$ making $\left(\beta S,\cdot\right)$
a compact, right topological semigroup containing $S$ in its topological
center. That is, for all $p\in\beta S$, the function $\rho_{p}:\beta S\rightarrow\beta S$
is continuous, where $\rho_{p}\left(q\right)=q\cdot p$ and for all
$x\in S$, the function $\lambda_{x}:\beta S\rightarrow\beta S$ is
continuous, where $\lambda_{x}\left(q\right)=x\cdot q$. For $p,q\in\beta S$
and $A\subseteq S$, $A\in p\cdot q$ if and only if $\left\{ x\in S:x^{-1}A\in q\right\} \in p$,
where $x^{-1}A=\left\{ y\in S:x\cdot y\in A\right\} $. 
\end{defn}

Since $\beta S$ is a compact Hausdorff right topological semigroup,
it has a smallest two sided ideal denoted by $K\left(\beta S\right)$,
which is the union of all of the minimal right ideals of $S$, as
well as the union of all of the minimal left ideals of $S$. Every
left ideal of $\beta S$ contains a minimal left ideal and every right
ideal of $\beta S$ contains a minimal right ideal. The intersection
of any minimal left ideal and any minimal right ideal is a group,
and any two such groups are isomorphic. Any idempotent $p$ in $\beta S$
is said to be minimal if and only if $p\in K\left(\beta S\right)$.
Though Central sets was defined dynamically, there is an algebraic
counterpart of this definition, established by V. Bergelson and N.
Hindman in \cite{key-2-1}. For the sake of our work we need to revisit
some important definitions. For more details see \cite{key-12}.
\begin{defn}
Let $\left(S,\cdot\right)$ be a semigroup and $A\subseteq S$, then
\end{defn}

\begin{enumerate}
\item The set $A$ is thick if and only if for any finite subset $F$ of
$S$, there exists an element $x\in S$ such that $F\cdot x\subset A$.
This means the sets which contains a translation of any finite subset.
For example one can see $\cup_{n\in\mathbb{N}}\left[2^{n},2^{n}+n\right]$
is a thick set in $\mathbb{N}$.
\item The set $A$ is syndetic if and only if there exists a finite subset
$G$ of $S$ such that $\bigcup_{t\in G}t^{-1}A=S$. That is, with
a finite translation if the set covers the entire semigroup, then
it will be called a Syndetic set. For example the set of even and
odd numbers are both syndetic in $\mathbb{N}$.
\item The sets which can be written as an intersection of a syndetic and
a thick set are called Piecewise syndetic sets. More formally a set
$A$ is Piecewise syndetic if and only if there exists $G\in\mathcal{P}_{f}\left(S\right)$
such that for every $F\in\mathcal{P}_{f}\left(S\right)$, there exists
$x\in S$ such that $F\cdot x\subseteq\bigcup_{t\in G}t^{-1}A$. Clearly
the thick sets and syndetic sets are natural examples of Piecewise
syndetic sets. From definition one can immediately see that $2\mathbb{N}\cap\bigcup_{n\in\mathbb{N}}\left[2^{n},2^{n}+n\right]$
is a nontrivial example of Piecewise syndetic sets in $\mathbb{N}$.
\item $\mathcal{T}=\,^{\mathbb{N}}S$.
\item For $m\in\mathbb{N}$, $\mathcal{J}_{m}=\left\{ \left(t\left(1\right),\ldots,t\left(m\right)\right)\in\mathbb{N}^{m}:t\left(1\right)<\ldots<t\left(m\right)\right\} .$
\item Given $m\in\mathbb{N}$, $a\in S^{m+1}$, $t\in\mathcal{J}_{m}$ and
$f\in F$, 
\[
x\left(m,a,t,f\right)=\left(\prod_{j=1}^{m}\left(a\left(j\right)\cdot f\left(t\left(j\right)\right)\right)\right)\cdot a\left(m+1\right)
\]
where the terms in the product $\prod$ are arranged in increasing
order.
\item $A\subseteq S$ is called a $J$-set iff for each $F\in\mathcal{P}_{f}\left(\mathcal{T}\right)$,
there exists $m\in\mathbb{N}$, $a\in S^{m+1}$, $t\in\mathcal{J}_{m}$
such that, for each $f\in\mathcal{T}$,
\[
x\left(m,a,t,f\right)\in A.
\]
\item If the semigroup $S$ is commutative, the definition is rather simple.
In that case, a set $A\subseteq S$ is a $J$-set if and only if whenever
$F\in\mathcal{P}_{f}\left(^{\mathbb{N}}S\right)$, there exist $a\in S$
and $H\in\mathcal{P}_{f}\left(\mathbb{N}\right)$, such that for each
$f\in F$, $a+\sum_{t\in H}f(t)\in A$.
\item If we are given any injective sequence $\langle x_{n}\rangle_{n=1}^{\infty}$
in $S$, then, a set $A$ which contains $FP\left(\langle x_{n}\rangle_{n=1}^{\infty}\right)$
for some injective sequence $\langle x_{n}\rangle_{n=1}^{\infty}$
in $S$, is called an IP set, where 
\[
FP\left(\langle x_{n}\rangle_{n=1}^{\infty}\right)=\left\{ x_{i_{1}}\cdot x_{i_{2}}\cdot\cdots\cdot x_{i_{n}}:\left\{ i_{1}<i_{2}<\cdots<i_{n}\right\} \subseteq\mathbb{N}\right\} .
\]
\item Then a subset $A$ of $S$ is called central if and only if there
is some minimal idempotent $p$ such that $A\in p$.
\end{enumerate}
Since $K\left(\beta S\right)$ can be expressed as union of minimal
left or right ideals, then it becomes natural to ask whether there
exists sets which does not meet every minimal left ideal in some idempotent.
The answer of the question turns out to be yes. This proposed a new
notion of large sets for semigroup. 
\begin{defn}
Let $S$ be a discrete semigroup and let $C$ be a subset of $S$.
Then $C$ is said to be \textit{strongly central} if for every minimal
left ideal $L$ of $\beta S$, $\overline{C}\cap L$ contains an idempotent.
\end{defn}

This definition first appeared in \cite{key-1-1} where the authors
also provided dynamical characterization of \emph{strongly central
sets.}

We will now discuss a notion which is the turning point towards the
topics of our current work. This concept first defined in \cite{key-14}.
Throughout this paper, $\mathcal{F}$will denote a filter of $\left(S,\cdot\right)$.
For every filter $\mathcal{F}$of $S$, define $\bar{\mathcal{F}}\subseteq\beta S$,
by
\[
\bar{\mathcal{F}}=\bigcap_{V\in\mathcal{F}}\bar{V}.
\]
 It is a routine check that $\bar{\mathcal{F}}$ is a closed subset
of $\beta S$ consisting of ultrafilters which contain $\mathcal{F}$.
If $\mathcal{F}$ is an idempotent filter, i.e., $\mathcal{F}\subset\mathcal{F}\cdot\mathcal{F}$,
then $\bar{\mathcal{F}}$ becomes a closed subsemigroup of $\beta S$,
but the converse is not true. Throughout our article, we will consider
only those filters $\mathcal{F}$, for which $\bar{\mathcal{F}}$
is a closed subsemigroup of $\beta S$.

In light of this notion we can define the concept of piecewise $\mathcal{F}$-syndeticity
both combinatorially and algebraically. For details see \cite{key-14}.
\begin{defn}
Let $T$ be a closed subsemigroup of $\beta S$ and $\mathcal{F}$
be a filter on $S$ such that $\bar{\mathcal{F}}=T$.

(1) A subset $A$ of $S$ is $\mathcal{F}$-syndetic if for every
$V\in\mathcal{F}$, there is a finite set $G\subseteq V$ such that
$G^{-1}A\in\mathcal{F}$.

(2) A subset $A\subseteq S$ is piecewise $\mathcal{F}$-syndetic
if for every $V\in\mathcal{F}$, there is a finite $F_{V}\subseteq V$
and $W_{V}\in\mathcal{F}$ such that whenever $H\subseteq W_{V}$
a finite subset, there is $y\in V$ such that $H\cdot y\subseteq F_{V}^{-1}A$.
\end{defn}

Here is an algebraic characterization of piecewise $\mathcal{F}$-syndetic
sets.
\begin{thm}
Let $T$ be a closed subsemigroup of $\beta S$, and $\mathcal{F}$be
the filter on $S$ such that $T=\bar{\mathcal{F}}$, also let $A\subseteq S$.
Then $\bar{A}\cap K\left(T\right)\neq\emptyset$ if and only if $A$
is piecewise $\mathcal{F}$-syndetic.
\end{thm}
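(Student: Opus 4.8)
The plan is to adapt the classical Hindman--Strauss characterization of piecewise syndetic sets by $K(\beta S)$, but to run the whole argument inside the compact right topological semigroup $T=\bar{\mathcal F}$, with syndeticity and thickness relativized to $\mathcal F$. I would rely on three standing facts. First, $K(T)$ is the union of the minimal left ideals of $T$. Second, for $B\subseteq S$ one has $B\in\mathcal F$ if and only if $T\subseteq\bar B$, since $\mathcal F=\bigcap_{q\in T}q$. Third, each map $\lambda_t$ is continuous on $\beta S$, so every set $\{q\in T:tq\in\bar B\}=\lambda_t^{-1}(\bar B)\cap T$ is clopen in $T$, which makes $T$ amenable to compactness arguments. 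I would also use throughout the identity $A\in pq$ if and only if $\{x:x^{-1}A\in q\}\in p$, together with its consequence, for $t\in S$, that $t^{-1}A\in q$ is the same as $A\in tq$.

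For the sufficiency direction I would take $p\in\bar A\cap K(T)$, so that $A\in p$ and $Tp$ is a minimal left ideal. The engine is the lemma that the return set $c=\{x\in S:x^{-1}A\in p\}$ is $\mathcal F$-syndetic. To prove it, I would fix $V\in\mathcal F$ and $q\in T$ and set $q'=qp\in Tp$; minimality of $Tp$ gives $p\in Tq'$, say $p=wq'$ with $w\in T$. Then $A\in p=wq'$ forces $\{x:x^{-1}A\in q'\}\in w$, and since $w\supseteq\mathcal F$ we also have $V\in w$, so intersecting yields $t\in V$ with $t^{-1}A\in q'$, i.e. $A\in t(qp)$, which says exactly $tq\in\bar c$. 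Thus for every $q\in T$ some $t\in V$ has $tq\in\bar c$; as the clopen sets $\{q\in T:tq\in\bar c\}$ cover the compact space $T$, a finite $G\subseteq V$ already covers, which is precisely $G^{-1}c\in\mathcal F$. To close the direction I would convert $\mathcal F$-syndeticity of $c$ into piecewise $\mathcal F$-syndeticity of $A$: given $V$, take the finite $G\subseteq V$ just produced, put $F_V=G$ and $W_V=G^{-1}c$; for a finite $H\subseteq W_V$ choose for each $h\in H$ some $t_h\in G$ with $t_hh\in c$, so $(t_hh)^{-1}A\in p$, and since $V\in p$ the set $V\cap\bigcap_{h\in H}(t_hh)^{-1}A$ lies in $p$ and is nonempty; any $y$ in it satisfies $Hy\subseteq G^{-1}A=F_V^{-1}A$.

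For the necessity direction I would start from a piecewise $\mathcal F$-syndetic $A$. Fixing $V$ with its $F_V,W_V$, the sets $\{y\in V:Hy\subseteq F_V^{-1}A\}$, over finite $H\subseteq W_V$, have the finite intersection property, so they generate an ultrafilter $q_V$ with $V\in q_V$ and with $sq_V\in\overline{F_V^{-1}A}$ for every $s\in W_V$; in this sense $F_V^{-1}A$ is thick along $W_V$, and its closure captures a left-ideal-type set through $q_V$. Letting $V$ range over $\mathcal F$ directed by reverse inclusion, I would pass to a cluster point $q$ of $(q_V)$; since $V\in q_V$ cofinally, $q\in\bigcap_{W\in\mathcal F}\bar W=T$. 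I would then select a minimal left ideal $L\subseteq Tq$ and a point $r\in L\subseteq K(T)$, and try to transfer a membership $F^{-1}A\in r$ down to $A$ by picking a suitable left translate from the finite set $F\subseteq S$.

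The hard part is exactly this last transfer. In $\beta S$ the set $K(\beta S)$ is a two-sided ideal, so once $F^{-1}A=\bigcup_{t\in F}t^{-1}A$ lies in some $r\in K(\beta S)$ one gets $A\in tr$ for some $t\in F$ with $tr\in K(\beta S)$ automatically, and the argument closes. Here $K(T)$ is only an ideal of $T$, and for $t\in F_V\subseteq V\subseteq S$ the translate $tr$ need not lie in $T$ at all, hence need not lie in $K(T)$; indeed $tr\in T$ amounts to $t^{-1}W\in r$ for every $W\in\mathcal F$. This is where idempotency of $\mathcal F$ must enter: from $\mathcal F\subseteq\mathcal F\cdot\mathcal F$ one obtains $W^{\star}=\{x:x^{-1}W\in\mathcal F\}\in\mathcal F$ for each $W\in\mathcal F$, so by replacing $V$ with $V\cap W_1^{\star}\cap\cdots\cap W_n^{\star}$ before invoking piecewise $\mathcal F$-syndeticity one can force the translating elements $t$ into the sets $W_i^{\star}$, whence $W_i\in tr$; a limit over the increasing finite families $\{W_i\}$, combined with the cluster point above, then drives $tr$ into every $\bar W$, i.e. into $T$, and therefore into $K(T)$. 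Managing this nested limit so that $A\in tr$ is preserved while $tr$ is pushed into $K(T)$ is the main obstacle, and it is precisely the idempotency of $\mathcal F$ that makes it surmountable.
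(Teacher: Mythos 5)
Your sufficiency direction ($\bar{A}\cap K(T)\neq\emptyset$ implies piecewise $\mathcal{F}$-syndetic) is correct and essentially complete: the relativized Hindman--Strauss argument showing that the return set $c=\{x\in S: x^{-1}A\in p\}$ is $\mathcal{F}$-syndetic (minimality of $Tp$, the fact that $V\in w$ for every $w\in T=\bar{\mathcal{F}}$, and compactness of $T$ covered by the clopen sets $\lambda_t^{-1}(\bar{c})\cap T$) goes through verbatim, and your passage to $F_V=G$, $W_V=G^{-1}c$ with witnesses drawn from the single ultrafilter $p$ yields exactly Definition 2.4(2); in fact, since all witnesses come from $p$, it yields the stronger finite-intersection formulation of Definition 3.1 as well. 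Note the paper gives no proof of this theorem at all, only the citation to \cite{key-14}, so this half of your attempt is genuine added content.

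The necessity direction, however, has a real gap, and you have located the difficulty in the wrong place. The unjustified step is the membership $F_V^{-1}A\in r$ for your chosen $r\in L\subseteq Tq$: you establish $\bar{W}_V\,q_V\subseteq\overline{F_V^{-1}A}$ for each separate $q_V$, but this property does not pass to the cluster point $q$, since left translation by elements of $T\setminus S$ is not continuous and the target sets $F_V^{-1}A$ change with $V$; Definition 2.4(2) guarantees finite intersection only within a fixed $V$, never across different members of $\mathcal{F}$. The repair is precisely the paper's Definition 3.1: the family $\{(x^{-1}F_V^{-1}A)\cap V : V\in\mathcal{F},\,x\in W_V\}$ has the finite intersection property globally, so a single ultrafilter $q$ containing it lies in $T$ and satisfies $Tq\subseteq\overline{F_V^{-1}A}$ simultaneously for all $V$. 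With that in hand, your final transfer needs no idempotency at all --- which matters, because the theorem assumes only that $\bar{\mathcal{F}}$ is a closed subsemigroup, and the paper states explicitly that this is strictly weaker than $\mathcal{F}\subseteq\mathcal{F}\cdot\mathcal{F}$, so a proof invoking idempotency establishes less than the cited result. The correct finish: fix $r\in K(T)\cap Tq$; for each $V$ pick $t_V\in F_V\subseteq V$ with $A\in t_V r$ (possible since $F_V^{-1}A\in r$ and $r$ is an ultrafilter); a cluster point $u$ of the net $(t_V)_{V\in\mathcal{F}}$ lies in $\bigcap_{V\in\mathcal{F}}\bar{V}=T$, and continuity of $\rho_r$ gives $ur$ as a limit of points $t_V r\in\bar{A}$, so $ur\in\bar{A}$, while $ur\in T\cdot K(T)\subseteq K(T)$. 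One never needs $tr\in T$ for any individual $t$, so the obstacle your idempotency device is designed to overcome is a phantom; the genuine obstacle is the cross-$V$ finite intersection property, which your per-$V$ ultrafilters and their cluster point do not supply.
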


\begin{proof}
See \cite{key-14}.
\end{proof}
Being motivated from these concepts the authors in \cite{key-7} defined
the concepts of Definition 2.3 in this new framework which we will
require in this paper.
\begin{defn}
Let $\left(S,\cdot\right)$ be an arbitrary semigroup and $\mathcal{F}$
be a filter on $S$. Then 
\end{defn}

\begin{enumerate}
\item For any $l\in\mathbb{N}$, and any $l$-sequences $\langle x_{n}^{(i)}\rangle_{n=1}^{\infty}$
for $i\in\left\{ 1,2,\cdots,l\right\} $, define the zigzag finite
product 
\[
ZFP\left(\langle x_{n}^{(i)}\rangle_{i,n=1,1}^{l,\infty}\right)=\left\{ \begin{array}{c}
\prod_{t\in H}y_{t}:H\in\mathcal{P}_{f}\left(\mathbb{N}\right)\,\text{and}\\
y_{i}\in\left\{ x_{i}^{(1)},x_{i}^{(2)},\cdots,x_{i}^{(l)}\right\} \,\text{for\,any}\,i\in\mathbb{N}
\end{array}\right\} .
\]
\item For any $k\in\mathbb{N}$, define 
\[
ZFP_{k}\left(\langle x_{n}^{(i)}\rangle_{i,n=1,1}^{l,\infty}\right)=\left\{ \begin{array}{c}
\prod_{t\in H}y_{t}:H\in\mathcal{P}_{f}\left(\mathbb{N}\right)\,\text{,min\ensuremath{H\geq k\,}and}\\
y_{i}\in\left\{ x_{i}^{(1)},x_{i}^{(2)},\cdots,x_{i}^{(l)}\right\} \,\text{for\,any}\,i\in\mathbb{N}
\end{array}\right\} .
\]
\item Let $G\in\mathcal{P}_{f}\left(^{\mathbb{N}}S\right)$, we will call
$G$ is $\mathcal{F}$ -good if for any $F\in\mathcal{F}$, there
exists $k=k\left(F\right)\in\mathbb{N}$ such that $ZFP_{k}\left(G\right)\subseteq F$. 
\item A set $B\subseteq S$ will be called a $\mathcal{F}$-$J$ set, if,
for any $\mathcal{F}$ -good map, say $F\in\mathcal{P}_{f}\left(^{\mathbb{N}}S\right)$,
there exist $a_{1},a_{2},\ldots,a_{m+1}\in S$ and $\left\{ h_{1},h_{2}\cdots,h_{m}\right\} _{\leq}\subset\mathbb{N}$
such that 
\[
x\left(m,a,h,f\right)=a_{1}f\left(h_{1}\right)a_{2}f\left(h_{2}\right)\ldots a_{m}f\left(h_{m}\right)a_{m+1}\in B.
\]
\item $\mathcal{P}_{f}^{\mathcal{F}}\left(^{\mathbb{N}}S\right)=\left\{ \begin{array}{ccccc}
F\in\mathcal{P}_{f}\left(^{\mathbb{N}}S\right): & F & \text{\,is} & \mathcal{F} & \text{\,-good}\end{array}\right\} .$
\item A set $A\subseteq S$ is called $\mathcal{F}$-central if and only
if there exists an idempotent ultrafilter $p\in K\left(\bar{\mathcal{F}}\right)$
such that $A\in p$.
\end{enumerate}
Now we need to establish the concept of product filter. Let $\mathcal{F}$
and $\mathcal{G}$ be two given filters on two semigroups $S$ and
$T$ respectively, such that $\bar{\mathcal{F}}$ and $\bar{\mathcal{G}}$
are two closed subsemigroups. Let $\mathcal{H}$ be a filter generated
by $\mathcal{F}\times\mathcal{G}$ over $S\times T$, i.e., $\mathcal{H}=\left\{ D:D\supset A\times B\,\text{for\,some}\,A\in\mathcal{F},B\in\mathcal{G}\right\} $.
We will consider those $\mathcal{H}$ which generates a closed subsemigroup
in $\beta\left(S\times T\right)$. The following lemma establishes
the existence of such filter on the product space.
\begin{lem}
If $\mathcal{F}$and $\mathcal{G}$ are two idempotent filters on
$S$ and $T$ respectively, then $\mathcal{H}$ is an idempotent filter
on $S\times T$ and hence $\bar{\mathcal{H}}$ is closed subsemigroup
on $S\times T$.
\end{lem}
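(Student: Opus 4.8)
The plan is to establish the assertion in three stages: first confirm that $\mathcal{H}$ is genuinely a filter, then prove the idempotency inclusion $\mathcal{H}\subseteq\mathcal{H}\cdot\mathcal{H}$, and finally read off the conclusion about $\bar{\mathcal{H}}$ from the criterion recalled in Section~2, namely that an idempotent filter always has closed subsemigroup closure.

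First I would verify the filter axioms for $\mathcal{H}$. Given $D_{1}\supseteq A_{1}\times B_{1}$ and $D_{2}\supseteq A_{2}\times B_{2}$ with $A_{i}\in\mathcal{F}$, $B_{i}\in\mathcal{G}$, one has $D_{1}\cap D_{2}\supseteq(A_{1}\cap A_{2})\times(B_{1}\cap B_{2})$, and the two factors remain in $\mathcal{F}$ and $\mathcal{G}$ by closure under intersection; upward closure is immediate from the definition of $\mathcal{H}$; and $\emptyset\notin\mathcal{H}$ since every rectangle $A\times B$ with $A\in\mathcal{F}$, $B\in\mathcal{G}$ is nonempty because $\emptyset$ lies in neither filter.

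The heart of the argument is the idempotency inclusion. I would work with the filter product in the form inherited from the ultrafilter product recalled in Section~2, namely $\mathcal{H}\cdot\mathcal{H}=\{D:\{(s,t):(s,t)^{-1}D\in\mathcal{H}\}\in\mathcal{H}\}$, where the operation on $S\times T$ is coordinatewise. Fix $D\in\mathcal{H}$ and choose $A\in\mathcal{F}$, $B\in\mathcal{G}$ with $D\supseteq A\times B$. The elementary but decisive computation is the decoupling of preimages,
\[
(s,t)^{-1}(A\times B)=\left(s^{-1}A\right)\times\left(t^{-1}B\right),
\]
so that $(s,t)^{-1}D\supseteq\left(s^{-1}A\right)\times\left(t^{-1}B\right)$. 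Hence whenever $s^{-1}A\in\mathcal{F}$ and $t^{-1}B\in\mathcal{G}$ we obtain $(s,t)^{-1}D\in\mathcal{H}$, and therefore
\[
\{(s,t):(s,t)^{-1}D\in\mathcal{H}\}\supseteq\{s:s^{-1}A\in\mathcal{F}\}\times\{t:t^{-1}B\in\mathcal{G}\}.
\]
Now the hypotheses enter: because $A\in\mathcal{F}$ and $\mathcal{F}\subseteq\mathcal{F}\cdot\mathcal{F}$, the set $\{s:s^{-1}A\in\mathcal{F}\}$ belongs to $\mathcal{F}$, and symmetrically $\{t:t^{-1}B\in\mathcal{G}\}\in\mathcal{G}$. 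Their product is thus a member of $\mathcal{H}$, and by upward closure the larger set on the left is in $\mathcal{H}$ too; this says precisely $D\in\mathcal{H}\cdot\mathcal{H}$.

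With $\mathcal{H}\subseteq\mathcal{H}\cdot\mathcal{H}$ in hand, the statement that $\bar{\mathcal{H}}$ is a closed subsemigroup of $\beta(S\times T)$ follows immediately from the implication for idempotent filters recalled above. The one step demanding care---and where I expect the main subtlety to lie---is the transfer of idempotency through the preimage identity: everything must be driven by the single rectangle $A\times B$ sitting inside $D$, and one must check that the two coordinate preimages separate cleanly, so that the idempotency of $\mathcal{F}$ and of $\mathcal{G}$ can be applied independently. Once that identity is secured the remainder is purely formal, invoking nothing beyond the filter axioms and the two idempotency hypotheses.
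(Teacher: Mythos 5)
Your proof is correct and follows essentially the same route as the paper's: fix $D\in\mathcal{H}$, pick a rectangle $A\times B\subseteq D$ with $A\in\mathcal{F}$, $B\in\mathcal{G}$, apply idempotency coordinatewise to get $\{s:s^{-1}A\in\mathcal{F}\}\in\mathcal{F}$ and $\{t:t^{-1}B\in\mathcal{G}\}\in\mathcal{G}$, and conclude $D\in\mathcal{H}\cdot\mathcal{H}$. You merely add detail the paper leaves implicit --- the verification that $\mathcal{H}$ is a filter at all, and the explicit decoupling identity $(s,t)^{-1}(A\times B)=\left(s^{-1}A\right)\times\left(t^{-1}B\right)$ together with the upward-closure step --- which is sound and arguably an improvement in rigor.
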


\begin{proof}
As $\mathcal{F}$and $\mathcal{G}$ are idempotent filters, we have
$\mathcal{F}\subset\mathcal{F}\cdot\mathcal{F}$ and $\mathcal{G}\subset\mathcal{G}\cdot\mathcal{G}$.
Let $A\in\mathcal{H}$, hence there is $B\in\mathcal{F}$ and $C\in\mathcal{G}$
such that $B\times C\subset A$. So, $\left\{ x:x^{-1}B\in\mathcal{F}\right\} \in\mathcal{F}$
and $\left\{ y:y^{-1}C\in\mathcal{G}\right\} \in\mathcal{G}$ and
this implies 
\[
\left\{ \left(x,y\right):\left(x,y\right)^{-1}\left(B\times C\right)\in\mathcal{F}\times\mathcal{G}\right\} \in\mathcal{F}\times\mathcal{G}
\]
Hence $\left\{ \left(x,y\right):\left(x,y\right)^{-1}A\in\mathcal{H}\right\} \in\mathcal{H}$.
Hence $\mathcal{H}$ is an idempotent filter and so $\bar{\mathcal{H}}$
is a closed subsemigroup on $S\times T$.
\end{proof}

\section{Product of $\mathcal{F}-$ Syndetic and Piecewise $\mathcal{F}-$
Syndetic Sets}

The following theorem shows that the product of two filter large syndetic
sets is again filter large syndetic.
\begin{thm}
If $\mathcal{F}$and $\mathcal{G}$ are two idempotent filters on
$S$ and $T$ respectively such that $\bar{\mathcal{F}}$and $\bar{\mathcal{G}}$are
two closed subsemigroups and $\mathcal{H}$ is a filter on $S\times T$
generated by $\mathcal{F}$and $\mathcal{G}$ such that $\bar{\mathcal{H}}$is
a closed subsemigroup of $\beta\left(S\times T\right)$. If $A$ and
$B$ are $\mathcal{F}$-syndetic and $\mathcal{G}$-syndetic sets
in $S$ and $T$ respectively then $A\times B$ is an $\mathcal{H}$-syndetic
set.
\end{thm}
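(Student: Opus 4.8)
The plan is to argue directly from the combinatorial definition of $\mathcal{F}$-syndeticity, reducing the statement on $S\times T$ to the two given statements on $S$ and $T$ via the rectangular structure of the sets generating $\mathcal{H}$. First I would fix an arbitrary $W\in\mathcal{H}$; by the definition of the generated filter there exist $U\in\mathcal{F}$ and $V\in\mathcal{G}$ with $U\times V\subseteq W$. The whole verification then takes place on this rectangle.

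Next, I would feed $U$ into the hypothesis that $A$ is $\mathcal{F}$-syndetic and $V$ into the hypothesis that $B$ is $\mathcal{G}$-syndetic. This produces a finite set $G\subseteq U$ with $G^{-1}A\in\mathcal{F}$ and a finite set $H\subseteq V$ with $H^{-1}B\in\mathcal{G}$. Setting $K=G\times H$ gives a finite subset of $U\times V\subseteq W$, which is the candidate finite set required by the definition of $\mathcal{H}$-syndeticity for the chosen $W$.

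The key step, and the only place where a computation is needed, is the identity
\[
K^{-1}(A\times B)=(G^{-1}A)\times(H^{-1}B).
\]
To see this I would expand $K^{-1}(A\times B)=\bigcup_{(g,h)\in G\times H}(g,h)^{-1}(A\times B)$ and observe that, because the operation on $S\times T$ is coordinatewise, $(g,h)^{-1}(A\times B)=(g^{-1}A)\times(h^{-1}B)$. The union over the product index set $G\times H$ then distributes: a pair $(s,t)$ lies in the union precisely when there is some $g\in G$ with $g\cdot s\in A$ \emph{and} some $h\in H$ with $h\cdot t\in B$, and since $g$ and $h$ range independently this is exactly membership in $(G^{-1}A)\times(H^{-1}B)$. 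I expect this decoupling of the two coordinates to be the main (though mild) obstacle, since one must check that the existential quantifiers separate cleanly; it is precisely the independence of the two index ranges that forces the union of products to coincide with the product of unions.

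Finally, since $G^{-1}A\in\mathcal{F}$ and $H^{-1}B\in\mathcal{G}$, the set $(G^{-1}A)\times(H^{-1}B)$ is a product of a member of $\mathcal{F}$ with a member of $\mathcal{G}$, hence lies in $\mathcal{H}$ by the very definition of $\mathcal{H}$. Thus $K^{-1}(A\times B)\in\mathcal{H}$. As $W\in\mathcal{H}$ was arbitrary and $K\subseteq W$ is finite, this verifies the defining condition and shows that $A\times B$ is $\mathcal{H}$-syndetic.
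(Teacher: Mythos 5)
Your proof is correct and follows essentially the same route as the paper's own (deliberately terse) argument: fix $W\in\mathcal{H}$, pass to a rectangle $U\times V\subseteq W$ with $U\in\mathcal{F}$ and $V\in\mathcal{G}$, apply the two syndeticity hypotheses to get finite $G\subseteq U$ and $H\subseteq V$, and conclude via $\left(G\times H\right)^{-1}\left(A\times B\right)=\left(G^{-1}A\right)\times\left(H^{-1}B\right)\in\mathcal{H}$. The only difference is that you verify this last identity (the decoupling of the union of products into a product of unions) explicitly, where the paper asserts it without comment.
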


\begin{proof}
This proof is a two liner. Let $V\in\mathcal{H}$, so there exists
$C\in\mathcal{F}$and $D\in\mathcal{G}$ such that $C\times D\subseteq V$.
By definition, there exists finite sets $F\subset C$ and $G\subset D$
such that $F^{-1}A\in\mathcal{F}$ and $G^{-1}B\in\mathcal{G}$. Hence
\[
\left(F\times G\right)^{-1}\left(A\times B\right)\in\mathcal{F}\times\mathcal{G}\subset\mathcal{H}
\]
 and so $A\times B$ is $\mathcal{H}$-syndetic.
\end{proof}
For our proof of product of filter piecewise syndetic sets, we need
an equivalent definition, different from the one given in the previous
section in Def. 2.5, which was first mentioned in \cite{key-14}.
\begin{defn}
Let $T$ be a closed subsemigroup of $\beta S$ and $\mathcal{F}$be
a filter on $S$ such that $\bar{\mathcal{F}}=T$ and let $A\subseteq S$.
$A$ is called piecewise $\mathcal{F}$-syndetic if for every $V\in\mathcal{F}$,
there is a finite $F_{V}\subseteq V$ and $W_{V}\in\mathcal{F}$ such
that the family 
\[
\left\{ \left(x^{-1}F_{V}^{-1}A\right)\cap V:V\in\mathcal{F},x\in W_{V}\right\} 
\]
 has the finite intersection property.
\end{defn}

Now we are in a position to state the theorem.
\begin{thm}
If $\mathcal{F}$and $\mathcal{G}$ are two idempotent filters on
$S$ and $T$ respectively such that $\bar{\mathcal{F}}$and $\bar{\mathcal{G}}$are
two closed subsemigroups and $\mathcal{H}$ is a filter on $S\times T$
generated by $\mathcal{F}$and $\mathcal{G}$ such that $\bar{\mathcal{H}}$is
a closed subsemigroup of $\beta\left(S\times T\right)$. If $A$ and
$B$ are piecewise $\mathcal{F}$-syndetic and piecewise $\mathcal{G}$-syndetic
sets in $S$ and $T$ respectively then $A\times B$ is an piecewise
$\mathcal{H}$-syndetic set.
\end{thm}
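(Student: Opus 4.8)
The plan is to work throughout with the finite-intersection-property formulation of piecewise $\mathcal{F}$-syndeticity given in the definition immediately preceding the theorem, rather than the translation formulation, since the product operation interacts transparently with intersections and rectangles. The first step is to unwind exactly what must be produced: for every $V\in\mathcal{H}$ I must exhibit a finite set $F_V\subseteq V$ and a set $W_V\in\mathcal{H}$ so that the family of sets $\bigl((x,y)^{-1}F_V^{-1}(A\times B)\bigr)\cap V$, indexed over all $V\in\mathcal{H}$ and all $(x,y)\in W_V$, has the finite intersection property.

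To assemble this data I would first fix, for each $V\in\mathcal{H}$, sets $C_V\in\mathcal{F}$ and $D_V\in\mathcal{G}$ with $C_V\times D_V\subseteq V$; these exist precisely because $\mathcal{H}$ is the filter generated by $\mathcal{F}\times\mathcal{G}$. Piecewise $\mathcal{F}$-syndeticity of $A$ then supplies, for each $C\in\mathcal{F}$, a finite $F^{A}_{C}\subseteq C$ and a set $W^{A}_{C}\in\mathcal{F}$ realizing the required FIP, and likewise piecewise $\mathcal{G}$-syndeticity of $B$ supplies $F^{B}_{D}\subseteq D$ and $W^{B}_{D}\in\mathcal{G}$. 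I would set $F_V=F^{A}_{C_V}\times F^{B}_{D_V}$ and $W_V=W^{A}_{C_V}\times W^{B}_{D_V}$: the former is a finite subset of $C_V\times D_V\subseteq V$, and the latter lies in $\mathcal{F}\times\mathcal{G}\subseteq\mathcal{H}$, so both are admissible.

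The computational core is the identity
\[
(x,y)^{-1}\bigl(F^{A}\times F^{B}\bigr)^{-1}(A\times B)=\bigl(x^{-1}(F^{A})^{-1}A\bigr)\times\bigl(y^{-1}(F^{B})^{-1}B\bigr),
\]
which follows from the coordinatewise description $(x,y)^{-1}(E\times E')=(x^{-1}E)\times(y^{-1}E')$ together with the elementary fact that a union of rectangles indexed by a product set is again a rectangle, namely $\bigl(\bigcup_a X_a\bigr)\times\bigl(\bigcup_b Y_b\bigr)=\bigcup_{a,b}X_a\times Y_b$. Granting this, each member of the product family contains the rectangle $\bigl((x^{-1}(F^{A}_{C_V})^{-1}A)\cap C_V\bigr)\times\bigl((y^{-1}(F^{B}_{D_V})^{-1}B)\cap D_V\bigr)$, since $C_V\times D_V\subseteq V$. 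A finite intersection of such rectangles then factors as the product of a finite intersection drawn from the $A$-family and one drawn from the $B$-family; each factor is nonempty by the FIP hypotheses on $A$ and on $B$ respectively, so their product is nonempty, which is exactly the finite intersection property for $A\times B$.

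The main obstacle I anticipate is not any individual computation but the bookkeeping: the FIP in the alternate definition ranges over all $V$ simultaneously, so I must maintain a coherent choice of the rectangles $C_V,D_V$ and of the witnessing data on each factor, and then verify that when a finite subfamily with indices $(V_i,(x_i,y_i))$ is selected, the coordinates $x_i\in W^{A}_{C_{V_i}}$ and $y_i\in W^{B}_{D_{V_i}}$ genuinely belong to the factorwise families to which the hypothesized FIP applies. Replacing $V$ by the smaller rectangle $C_V\times D_V$ throughout keeps the rectangle structure exact, so that the intersection splits cleanly into its two coordinate intersections and the verification becomes routine.
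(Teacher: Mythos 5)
Your proposal is correct and takes essentially the same route as the paper's own proof: both work with the finite-intersection-property reformulation of piecewise filter syndeticity (Definition 3.2), choose for each $V\in\mathcal{H}$ a rectangle $C\times D\subseteq V$ with $C\in\mathcal{F}$, $D\in\mathcal{G}$, take the products $F_{C}\times F_{D}$ and $W_{C}\times W_{D}$ as the witnessing data, and reduce the FIP for $A\times B$ to the two factorwise FIPs via the rectangle identity $(x,y)^{-1}(F_{C}\times F_{D})^{-1}(A\times B)=\left(x^{-1}F_{C}^{-1}A\right)\times\left(y^{-1}F_{D}^{-1}B\right)$. Your version merely spells out the indexing and bookkeeping more carefully than the paper's terse write-up; the mathematics is identical.
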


\begin{proof}
For every $V\in\mathcal{H}$ there exists $C\in\mathcal{F}$ and $D\in\mathcal{G}$
such that $C\times D\subseteq V$. Hence from the definition of filter
piecewise syndeticity, there exists finite $F_{C}\subset C$, $F_{D}\subset D$
and $W_{C}\in\mathcal{F}$, $W_{D}\in\mathcal{G}$ such that, $\left\{ \left(x^{-1}F_{C}^{-1}A\right)\cap C:C\in\mathcal{F},x\in W_{C}\right\} $
and $\left\{ \left(y^{-1}F_{D}^{-1}B\right)\cap D:D\in\mathcal{G},y\in W_{D}\right\} $
both have the finite intersection property. Hence 
\[
\left\{ \left(\left(x,y\right)^{-1}\left(F_{C}\times F_{D}\right)^{-1}\left(A\times B\right)\right)\cap\left(C\times D\right):\left(x,y\right)\in W_{C}\times W_{D},C\times D\in\mathcal{F}\times\mathcal{G}\right\} 
\]

has the finite intersection property. This implies 
\[
\left\{ \left(\left(x,y\right)^{-1}\left(F_{C}\times F_{D}\right)^{-1}\left(A\times B\right)\right)\cap V:\left(x,y\right)\in W_{C}\times W_{D},V\in\mathcal{H}\right\} 
\]

has the finite intersection property, hence $A\times B$ is piecewise
$\mathcal{H}$-syndetic.
\end{proof}

\section{Product of Filter J sets}

In this section we want to discuss what happens when we take the cartesian
product of two filter $J$ sets. Motivation of this work comes from
the work of Goswami \cite{key-6}, in which he proved two lemmas which
were the basis of the main result. Here we prove the filter analogue
of those lemmas. Before entering in the results we want to give a
definition from \cite{key-12} which we will require to prove the
results of this section.
\begin{defn}
Let $\left(S,\cdot\right)$ be a semigroup and $\left\langle x_{n}\right\rangle _{n=1}^{\infty}$
be a sequence in $S$. The sequence $\left\langle y_{n}\right\rangle _{n=1}^{\infty}$
is called a product subsystem of $\left\langle x_{n}\right\rangle _{n=1}^{\infty}$
if and only if there is a sequence $\left\langle H_{n}\right\rangle _{n=1}^{\infty}$
in $\mathcal{P}_{f}\left(\mathbb{N}\right)$ such that for every $n\in\mathbb{N}$,
$max\,H_{n}<min\,H_{n+1}$ and $y_{n}=\prod_{t\in H_{n}}x_{t}$. 
\end{defn}

Also we define another notion.
\begin{defn}
Let $\left\langle F_{n}\right\rangle _{n=1}^{\infty}$ be a sequence
in $\mathcal{P}_{f}\left(\mathbb{N}\right)$.

(a) $FU\left(\left\langle F_{n}\right\rangle _{n=1}^{\infty}\right)=\left\{ \bigcup_{n\in G}F_{n}:G\in\mathcal{P}_{f}\left(\mathbb{N}\right)\right\} $
.

(b) We call $\left\langle G_{n}\right\rangle _{n=1}^{\infty}$ a union
subsystem of $\left\langle F_{n}\right\rangle _{n=1}^{\infty}$ iff
there is a sequence $\left\langle H_{n}\right\rangle _{n=1}^{\infty}$
in $\mathcal{P}_{f}\left(\mathbb{N}\right)$ such that for every $n\in\mathbb{N}$,
$max\,H_{n}<min\,H_{n+1}$ and $G_{n}=\bigcup_{t\in H_{n}}F_{t}$.
\end{defn}

Now we will state and prove several results before entering into the
main theorem.
\begin{lem}
Let $\left(S,\cdot\right)$ be a semigroup, and $A$ be an $\mathcal{F-}J$-set
in $S$, let $F\in\mathcal{P}_{f}\left(^{\mathbb{N}}S\right)$ be
$\mathcal{F}-$good. Also let 
\[
\Lambda=\left\{ \begin{array}{c}
M\subset\mathbb{N}:M\,\text{is\,finite\,and}\,M=\left\{ \tau\left(1\right),\tau\left(2\right),\ldots,\tau\left(m\right)\right\} _{<}\\
\,\text{and}\,\,\exists a\in S^{m+1}\,\text{\,such\,that}\,\,\forall f\in F\\
\left(a\left(1\right)\cdot f\left(\tau\left(1\right)\right)\cdot a\left(2\right)\ldots a\left(m\right)\cdot f\left(\tau\left(m\right)\right)\cdot a\left(m+1\right)\in A\right)
\end{array}\right\} .
\]
 If $\left\langle G_{n}\right\rangle _{n=1}^{\infty}$ be a sequence
in $\mathcal{P}_{f}\left(\mathbb{N}\right)$ such that $maxG_{n}<minG_{n+1}$
for each $n\in\mathbb{N}$, then there exists $H\in\mathcal{P}_{f}\left(\mathbb{N}\right)$
such that $\bigcup_{n\in H}G_{n}\in\Lambda$.
\end{lem}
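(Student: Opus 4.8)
The plan is to imitate the combinatorial argument of \cite{key-12}, the only genuinely new ingredient being the verification that the auxiliary maps we build are $\mathcal{F}$-good; once that is secured, the $\mathcal{F}$-$J$-set hypothesis on $A$ does all the work. Write each block as $H_{n}=\left\{ h_{n}(1)<h_{n}(2)<\cdots<h_{n}(r_{n})\right\}$ and, for every $f\in F$, define a single aggregated map $g_{f}\in{}^{\mathbb{N}}S$ by
\[
g_{f}(n)=f(h_{n}(1))\cdot f(h_{n}(2))\cdots f(h_{n}(r_{n}))=\prod_{t\in H_{n}}f(t),
\]
the factors arranged in increasing order of $t$. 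Intuitively $g_{f}$ reads the original map $f$ one whole block at a time, so that selecting finitely many block-indices $n$ for the family $\left\{ g_{f}\right\}$ should correspond to selecting the finite union of the matching blocks $H_{n}$, which is the object we want to land in $\Theta$.

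The step I expect to be the main obstacle, and the place where the filter structure (absent from the classical statement) must be fed in, is checking that $G=\left\{ g_{f}:f\in F\right\}$ is $\mathcal{F}$-good, i.e. that $G\in\mathcal{P}_{f}^{\mathcal{F}}({}^{\mathbb{N}}S)$. The key observation is that the hypothesis $\max H_{n}<\min H_{n+1}$ keeps the blocks strictly separated, so that any zigzag finite product of the maps $g_{f}$ taken over an increasing set of block-indices unwinds into a zigzag finite product of the \emph{original} maps $f\in F$ taken over the increasing set of points $\bigcup_{n}H_{n}\subseteq\mathbb{N}$: a typical element $\prod_{k\in H'}g_{f^{(k)}}(k)=\prod_{k\in H'}\prod_{t\in H_{k}}f^{(k)}(t)$ is exactly $\prod_{t\in I}z_{t}$ with $I=\bigcup_{k\in H'}H_{k}$ and each $z_{t}=f^{(k)}(t)$ ranging over the values of the maps in $F$, where block-separation guarantees that each $t$ lies in a unique $H_{k}$ so the choice is well defined and $I$ is genuinely increasing. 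This gives $ZFP(G)\subseteq ZFP(F)$, and since $F$ is itself $\mathcal{F}$-good we have $ZFP(F)\subseteq W$ for every $W\in\mathcal{F}$, whence $ZFP(G)\subseteq W$ for every $W\in\mathcal{F}$ and $G$ is $\mathcal{F}$-good.

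With $\mathcal{F}$-goodness in hand I would apply the $\mathcal{F}$-$J$-set property of $A$ to the finite $\mathcal{F}$-good family $G$: there are $m\in\mathbb{N}$, $a\in S^{m+1}$ and $s\in\mathcal{J}_{m}$ with
\[
a(1)\,g_{f}(s(1))\,a(2)\,g_{f}(s(2))\cdots a(m)\,g_{f}(s(m))\,a(m+1)\in A
\]
for every $f\in F$. Setting $K=\left\{ s(1),\ldots,s(m)\right\} \in\mathcal{P}_{f}(\mathbb{N})$ and $L=\bigcup_{n\in K}H_{n}$, I would expand each $g_{f}(s(j))$ back into its block of factors and read this displayed membership as a witness of the type recorded by $\Theta$ for the index set $L$, the coordinates of $a$ serving as the interspersed elements at the block junctions and the block factors reproducing the values of $f$ on $L$ in increasing order (again by $\max H_{n}<\min H_{n+1}$). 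This yields $\bigcup_{n\in K}H_{n}\in\Theta$, as required, with a nonempty $K$ guaranteed since $m\geq1$.

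The one point demanding care in the reduction, beyond the $\mathcal{F}$-goodness verification above, is the noncommutative bookkeeping in the last paragraph: reconciling the within-block factors of $g_{f}$ with the interspersed-element normal form of $\Theta$, which is handled exactly as in \cite{key-12}. Everything else is the translation between the two displayed products, so the substantive new content of the filter version is concentrated entirely in the containment $ZFP(G)\subseteq ZFP(F)$ and its consequence that the aggregated maps remain $\mathcal{F}$-good.
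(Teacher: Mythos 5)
The paper itself gives no proof of this lemma --- it is skipped with a pointer to \cite{key-12} --- and your reconstruction is certainly the intended route: aggregate each $f\in F$ into the block-product map $g_{f}(n)=\prod_{t\in H_{n}}f(t)$, verify that $G=\{g_{f}:f\in F\}$ is $\mathcal{F}$-good, apply the $\mathcal{F}$-$J$ property of $A$ to $G$, and expand. Your verification $ZFP(G)\subseteq ZFP(F)$ is correct (the block separation $\max H_{n}<\min H_{n+1}$ does exactly what you say), and you are right that this is the new filter-theoretic ingredient; note only that you silently use that $F$ itself is $\mathcal{F}$-good, a hypothesis missing from the statement as printed but clearly intended, since the $\mathcal{F}$-$J$ property says nothing about families that are not good.

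The genuine gap is the step you defer with ``handled exactly as in \cite{key-12}.'' With $g_{f}(n)=\prod_{t\in H_{n}}f(t)$, the membership $a(1)\,g_{f}(s(1))\,a(2)\cdots a(m)\,g_{f}(s(m))\,a(m+1)\in A$ expands to a product in which consecutive factors $f(t)\cdot f(t')$, for $t,t'$ adjacent \emph{inside} a single block $H_{s(j)}$, have no element of $S$ between them; but membership of $L=\bigcup_{n\in K}H_{n}$ in $\Theta$ requires a witness $b\in S^{\left|L\right|+1}$ with an interspersed coordinate between \emph{every} consecutive pair of $f$-values. Your own phrase ``at the block junctions'' concedes that the coordinates of $a$ supply separators only between blocks, so in an arbitrary semigroup (no identity available to serve as the within-block $b(i)$'s) the displayed membership is not of $\Theta$-form. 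The classical argument has a cheap fix: since an ordinary $J$-set's defining property applies to \emph{every} finite family of maps, one may decorate the aggregated maps, e.g.\ $g_{f}(n)=f(h_{1})\cdot u\cdot f(h_{2})\cdot u\cdots u\cdot f(h_{r})$ for a fixed $u\in S$, and then $a$ together with $u$ witnesses $\Theta$. But precisely this fix is unavailable to you: inserting $u$ destroys the containment $ZFP(G)\subseteq ZFP(F)$ on which your goodness verification rests, so the decorated family is no longer obviously $\mathcal{F}$-good and the $\mathcal{F}$-$J$ hypothesis cannot be invoked on it. The two halves of your proposal thus pull in opposite directions: the construction that makes goodness automatic breaks the $\Theta$-readout in noncommutative $S$, and the construction that repairs the $\Theta$-readout breaks goodness. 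As written, your argument is complete only when $S$ is commutative (where the paper's own $J$-set definition involves no interspersed elements, so plain block products suffice) or has an identity; in the general case one must show that suitably decorated maps can still be chosen $\mathcal{F}$-good, a genuinely new point that neither your proposal nor the paper (which simply cites \cite{key-12}, whose classical freedom to insert arbitrary elements is exactly what the filter setting forfeits) addresses.
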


\begin{proof}
Let for each $n\in\mathbb{N}$, $Card\left(G_{n}\right)=\gamma_{n}$.
Let 
\[
G_{n}=\left\{ \ell_{n,1},\ell_{n,2},\cdots,\ell_{n,\gamma_{n}}\right\} _{<}.
\]
 We have that $F$ is an $\mathcal{F}-$good map. So by definition
we get, for a given $V\in\mathcal{F}$, we get a $k=k\left(V\right)\in\mathbb{N}$,
such that $ZFP_{k}\left(F\right)\subseteq V$.

Define 
\[
h_{f}\left(n\right)=f\left(\ell_{n,1}\right)\cdot f\left(K\right)\cdot f\left(\ell_{n,2}\right)\cdot f\left(K\right)\cdots f\left(K\right)\cdots f\left(\ell_{n,\gamma_{n}}\right)
\]
 where $K\geq k$.

For this $h_{f}$, the product on the right side is in $V$, since
$K\geq k$ and by definition $F$ is $\mathcal{F}-$good map. So 
\[
ZFP_{k}\left(\left\{ h_{f}:f\in F\right\} \right)\subseteq ZFP_{k}\left(F\right)\subseteq V.
\]

It means that if $F$ is an $\mathcal{F}-$good map, so is the collection
$\left\{ h_{f}:f\in F\right\} \in\mathcal{P}_{f}\left(^{\mathbb{N}}S\right)$.
So by definiton of $\mathcal{F}-$\emph{J }set, we have that for this
$\mathcal{F}-$good map $\left\{ h_{f}:f\in F\right\} $, we have
$m\in\mathbb{N}$, $a\in S^{m+1}$ and 
\[
\tau\left(1\right)<\tau\left(2\right)<\ldots<\tau\left(m\right)
\]
 from natural numbers such that 
\[
a\left(1\right)\cdot h_{f}\left(\tau\left(1\right)\right)\cdot a\left(2\right)\cdot h_{f}\left(\tau\left(2\right)\right)\cdots a\left(m\right)\cdot h_{f}\left(\tau\left(m\right)\right)\cdots a\left(m+1\right)\in A.
\]

We take $H=\left\{ \tau\left(1\right),\tau\left(2\right),\ldots,\tau\left(m\right)\right\} $.
This is our required set.
\end{proof}
\begin{prop}
Let an arbitrary semigroup be $\left(S,\cdot\right)$ and a filter
be $\mathcal{F}$. Let a sequence in $S$ be $\langle x_{n}\rangle_{n=1}^{\infty}$
and an idempotent in the form of $p$ in $\bar{\mathcal{F}}$ so that
given any $m\in\mathbb{N}$, $FP\left(\langle x_{n}\rangle_{n=m}^{\infty}\right)\in p$.
Let $A\in p$, so a product subsystem $\langle y_{n}\rangle_{n=1}^{\infty}$
of $\langle x_{n}\rangle_{n=1}^{\infty}$ can be obtained, so that
$FP\left(\langle y_{n}\rangle_{n=1}^{\infty}\right)\subseteq A$. 
\end{prop}

\begin{proof}
Recall $A^{\,*}=\left\{ x\in A:x^{-1}A\in p\right\} $. Pick $y_{1}\in A^{\,*}\cap FP\left(\langle x_{n}\rangle_{n=1}^{\infty}\right)$.
Also pick $H_{1}\in\mathcal{P}_{f}\left(\mathbb{N}\right)$ such that
$y_{1}=\prod_{t\in H_{1}}x_{t}$.

We pick inductively $n\in\mathbb{N}$ and assumptionally chosen $\langle y_{i}\rangle_{i=1}^{n}$
and $\left\langle H_{i}\right\rangle _{i=1}^{n}$ so that : 

(1) we have $y_{i}=\prod_{t\in H_{i}}x_{t}$ for each $i\in\left\{ 1,2,\cdots,n\right\} $.

(2) if $i<n$, then $maxH_{i}<minH_{i+1}$ , and

(3) $FP\left(\langle y_{i}\rangle_{i=1}^{n}\right)\subseteq A^{\,*}$.

Let $K=FP\left(\langle y_{i}\rangle_{i=1}^{n}\right)$ and let $k=max\,H_{n}+1$.
Name 
\[
B=FP\left(\langle x_{i}\rangle_{i=k}^{\infty}\right)\cap A^{\,*}\cap\bigcap_{a\in K}a^{-1}A^{\,*}.
\]

We already know that $a^{-1}A^{\,*}\in p$ for each $a\in K$, so
$B\in p$. Pick $y_{n+1}\in B$ and $H_{n+1}$ so that $min\,H_{n+1}\geq k$
and we get $y_{n+1}=\prod_{t\in H_{n+1}}x_{t}$.

Given $a\in K$, we get $a\cdot y_{n+1}\in A^{\,*}$which gives us
$FP\left(\langle y_{i}\rangle_{i=1}^{n+1}\right)\subseteq A^{\,*}\subseteq A$. 
\end{proof}
Next we state another lemma which follows from these previous two
results.
\begin{lem}
Let a semigroup be $\left(S,\cdot\right)$, and $A$ is a $\mathcal{F-}J$-set
in $S$, let $F\in\mathcal{P}_{f}\left(^{\mathbb{N}}S\right)$ be
$\mathcal{F}-$good and
\[
\Lambda=\left\{ \begin{array}{c}
M\subset\mathbb{N}:M\,\text{is\,finite\,\,and}\,M=\left\{ \tau\left(1\right),\tau\left(2\right),\ldots,\tau\left(m\right)\right\} _{<}\\
\,\text{and\,}\,\exists a\in S^{m+1}\,\text{such\,that}\,\,\forall f\in F\\
\left(a\left(1\right)\cdot f\left(\tau\left(1\right)\right)\cdot a\left(2\right)\ldots a\left(m\right)\cdot f\left(\tau\left(m\right)\right)\cdot a\left(m+1\right)\in A\right)
\end{array}\right\} .
\]
 Let a sequence be $\left\langle G_{n}\right\rangle _{n=1}^{\infty}$
in $\mathcal{P}_{f}\left(\mathbb{N}\right)$ so that $max\,G_{n}<min\,G_{n+1}$
for all $n\in\mathbb{N}$, then we can have a union subsystem $\left\langle H_{n}\right\rangle _{n=1}^{\infty}$
of $\left\langle G_{n}\right\rangle _{n=1}^{\infty}$ for which $FU\left(\left\langle H_{n}\right\rangle _{n=1}^{\infty}\right)\subseteq\Lambda$.
\end{lem}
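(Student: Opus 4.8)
The plan is to obtain the statement as an immediate consequence of the preceding lemma (the single-union statement) together with Hindman's Finite Unions Theorem, exactly as in the non-filter argument of \cite{key-12}. The crucial observation is that the previous lemma is not tied to the particular sequence $\langle H_n\rangle$ we are handed: it asserts, for \emph{every} sequence of blocks in $\mathcal{P}_f(\mathbb{N})$ with strictly increasing maxima, the existence of a finite union lying in $\Theta$. This universal quantification is precisely what will foreclose the ``bad'' colour below.

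First I would define a $2$-colouring of $FU(\langle H_n\rangle)$, the set of all finite unions $\bigcup_{n\in K}H_n$ with $K\in\mathcal{P}_f(\mathbb{N})$, by assigning colour $1$ to those unions that belong to $\Theta$ and colour $0$ to the rest. Since $(\mathcal{P}_f(\mathbb{N}),\cup)$ is the canonical semigroup for the Finite Unions Theorem, Hindman's theorem in its finite-unions form yields a union subsystem $\langle G_n\rangle_{n=1}^{\infty}$ of $\langle H_n\rangle_{n=1}^{\infty}$ such that $FU(\langle G_n\rangle)$ is monochromatic, where each $G_n=\bigcup_{i\in K_n}H_i$ for a sequence $\langle K_n\rangle$ with $\max K_n<\min K_{n+1}$. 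Next I would check that $\langle G_n\rangle$ is itself a legitimate block sequence, i.e. $\max G_n<\min G_{n+1}$ for every $n$; this is immediate from $\max K_n<\min K_{n+1}$ and the fact that the original $H_i$ already have increasing maxima. Consequently the previous lemma applies verbatim to $\langle G_n\rangle$ and produces a finite $K$ with $\bigcup_{n\in K}G_n\in\Theta$. This element lies in $FU(\langle G_n\rangle)$ and carries colour $1$, so the monochromatic colour must be $1$; that is, $FU(\langle G_n\rangle)\subseteq\Theta$, as required.

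I do not expect a genuine obstacle here: the combinatorial content is entirely absorbed into the previous (single-union) lemma, and the passage to all finite unions is handled by a black-box appeal to Hindman's theorem. The only points that require care are verifying that a union subsystem of a block sequence is again a block sequence with increasing maxima, so that the previous lemma may be reapplied, and noting that every member of $FU(\langle G_n\rangle)$ is a nonempty finite set, so that no degenerate union interferes with membership in $\Theta$. If one wished to avoid citing Hindman's theorem and keep the argument self-contained, the same conclusion is reachable by the standard recursive construction underlying the Finite Unions Theorem, building the $G_n$ one block at a time and using the previous lemma at each stage to keep every newly created finite union inside $\Theta$; but invoking the Finite Unions Theorem directly is by far the cleaner route and matches the remark that the proof is ``almost the same'' as in \cite{key-12}.
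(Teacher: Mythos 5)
Your proposal is correct and is essentially the paper's own argument: the paper defers the proof to \cite{key-12}, where precisely this route is taken --- two-colour $FU\left(\left\langle H_{n}\right\rangle _{n=1}^{\infty}\right)$ by membership in $\Theta$, invoke the Finite Unions Theorem to extract a monochromatic union subsystem $\left\langle G_{n}\right\rangle _{n=1}^{\infty}$, and then apply the preceding single-union lemma to $\left\langle G_{n}\right\rangle _{n=1}^{\infty}$ (a legitimate block sequence) to force the colour to be the one lying in $\Theta$. Your two points of care (that a union subsystem is again a block sequence with increasing maxima, and that its finite unions are finite unions of the original $H_{i}$) are exactly the right checks, so nothing is missing.
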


\begin{proof}
We take an $\mathcal{F}$-\emph{IP }set $FP\left(\langle x_{n}\rangle_{n=1}^{\infty}\right)$
and construct 
\[
T=\bigcap_{m=1}^{\infty}\overline{FP\left(\langle x_{n}\rangle_{n=m}^{\infty}\right)}
\]
 which is a closed subsemigroup of $\bar{\mathcal{F}}$. 

So by Ellis theorem\cite{key-3-1} , we have that $E\left(T\right)\neq\emptyset$,
say $p\in E\left(T\right)$. So for each $m\in\mathbb{N}$, $FP\left(\langle x_{n}\rangle_{n=m}^{\infty}\right)\in p$.
Now if $\bigcup_{i=1}^{r}A_{i}=FP\left(\langle x_{n}\rangle_{n=m}^{\infty}\right)\in p$,
we can choose $i\in\left\{ 1,2,\cdots,r\right\} $ so that $A_{i}\in p$.

Hence by the previous proposition we can choose a product subsystem
$\langle y_{n}\rangle_{n=1}^{\infty}$ of $\langle x_{n}\rangle_{n=1}^{\infty}$,
so that $FP\left(\langle y_{n}\rangle_{n=1}^{\infty}\right)\subseteq A_{i}$. 

For our proof of this lemma we now choose the partition 
\[
A_{1}=\Lambda\cap FU\left(\left\langle G_{n}\right\rangle _{n=1}^{\infty}\right)
\]
 and 
\[
A_{2}=FU\left(\left\langle G_{n}\right\rangle _{n=1}^{\infty}\right)\setminus\Lambda.
\]
As shown earlier, if we pick $A_{i}$, we can have a union subsystem
$\left\langle H_{n}\right\rangle _{n=1}^{\infty}$ for which, $FU\left(\left\langle H_{n}\right\rangle _{n=1}^{\infty}\right)\subseteq A_{i}$.
From earlier lemma we may assume $H\in\mathcal{P}_{f}\left(\mathbb{N}\right)$
for which $\bigcup_{n\in H}H_{n}\in\Lambda$. Since 
\[
\bigcup_{n\in H}H_{n}\in FU\left(\left\langle H_{n}\right\rangle _{n=1}^{\infty}\right)
\]
it is guaranteed that $FU\left(\left\langle H_{n}\right\rangle _{n=1}^{\infty}\right)\subseteq A_{1}\subseteq\Lambda$,
and we are done.
\end{proof}
Now we are in the position to prove the main theorem of this section.
\begin{thm}
Let $\mathcal{F}$ and $\mathcal{G}$ are two idempotent filters on
$S$ and $T$ respectively such that $\bar{\mathcal{F}}$ and $\bar{\mathcal{G}}$
are two closed subsemigroups and $\mathcal{H}$ is a filter on $S\times T$
generated by $\mathcal{F}$ and $\mathcal{G}$ such that $\bar{\mathcal{H}}$
is a closed subsemigroup of $\beta\left(S\times T\right)$. If $A$
is an $\mathcal{F}$-J set and $B$ is a $\mathcal{G}$-J set then
$A\times B$ is an $\mathcal{H}$-J set. 
\end{thm}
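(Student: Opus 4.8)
The plan is to decouple the product $S\times T$ into its two factors and then re-synchronize them through a single common set of indices. Fix an arbitrary $\mathcal{H}$-good family $\Phi\in\mathcal{P}_f^{\mathcal{H}}\left({}^{\mathbb{N}}(S\times T)\right)$. Each $\phi\in\Phi$ splits as $\phi=(\phi_1,\phi_2)$ with $\phi_1\in{}^{\mathbb{N}}S$ and $\phi_2\in{}^{\mathbb{N}}T$, and I set $F_1=\{\phi_1:\phi\in\Phi\}$ and $F_2=\{\phi_2:\phi\in\Phi\}$. The goal I aim for is a single index set $L=\{t(1),\dots,t(m)\}_{<}$ together with $a\in S^{m+1}$ and $b\in T^{m+1}$ such that $a(1)\phi_1(t(1))a(2)\cdots a(m)\phi_1(t(m))a(m+1)\in A$ for every $\phi_1\in F_1$ and $b(1)\phi_2(t(1))b(2)\cdots b(m)\phi_2(t(m))b(m+1)\in B$ for every $\phi_2\in F_2$. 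Once this is in hand, the constants $(a(i),b(i))\in S\times T$ together with the common indices $h_i=t(i)$ give, for each $\phi\in\Phi$,
\[
(a(1),b(1))\,\phi(t(1))\,(a(2),b(2))\cdots(a(m),b(m))\,\phi(t(m))\,(a(m+1),b(m+1))\in A\times B,
\]
since the two coordinates evaluate separately in $A$ and $B$; this is exactly the $\mathcal{H}$-$J$ property for $A\times B$.

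The first step is to verify that $F_1$ is $\mathcal{F}$-good and $F_2$ is $\mathcal{G}$-good, so that the hypotheses that $A$ is an $\mathcal{F}$-$J$ set and $B$ a $\mathcal{G}$-$J$ set, and with them the union-subsystem lemma of this section, become applicable in the factors. The key observation is that the coordinate projection $\pi_S\colon S\times T\to S$ carries $ZFP(\Phi)$ exactly onto $ZFP(F_1)$: at each position $t$ the admissible first coordinates are precisely $\{\phi_1(t):\phi\in\Phi\}$, and the selections are made independently across positions both in the product and in the factor, so projecting a zigzag product realizes every zigzag product in the first coordinate and conversely. Since $C\times D\in\mathcal{H}$ for all $C\in\mathcal{F}$ and $D\in\mathcal{G}$, $\mathcal{H}$-goodness gives $ZFP(\Phi)\subseteq C\times D$, whence $ZFP(F_1)=\pi_S\!\left(ZFP(\Phi)\right)\subseteq C$ for every $C\in\mathcal{F}$; thus $F_1$ is $\mathcal{F}$-good, and symmetrically $F_2$ is $\mathcal{G}$-good.

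Next I would invoke the union-subsystem lemma (the second of the two lemmas above) twice. Let $\Theta_A$ be the set associated with $A$ and $F_1$, and $\Theta_B$ the set associated with $B$ and $F_2$, as in that lemma. Starting from the block sequence $H_n=\{n\}$, which satisfies $\max H_n<\min H_{n+1}$, the lemma applied to $A$ and $F_1$ yields a union subsystem $\langle G_n^{(1)}\rangle$ with $FU\!\left(\langle G_n^{(1)}\rangle\right)\subseteq\Theta_A$. This subsystem again has strictly increasing blocks, so the lemma applied to $B$ and $F_2$ with input $\langle G_n^{(1)}\rangle$ yields a further union subsystem $\langle G_n^{(2)}\rangle$ with $FU\!\left(\langle G_n^{(2)}\rangle\right)\subseteq\Theta_B$. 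Because passing to a union subsystem only shrinks the collection of finite unions, $FU\!\left(\langle G_n^{(2)}\rangle\right)\subseteq FU\!\left(\langle G_n^{(1)}\rangle\right)\subseteq\Theta_A$ as well, so every $L\in FU\!\left(\langle G_n^{(2)}\rangle\right)$ lies in $\Theta_A\cap\Theta_B$.

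Choosing any such $L=\{t(1),\dots,t(m)\}_{<}$ then furnishes, by membership in $\Theta_A$, a witness $a\in S^{m+1}$ and, by membership in $\Theta_B$, a witness $b\in T^{m+1}$ for the \emph{same} indices $t(1)<\dots<t(m)$, which is precisely the synchronized data required above. I expect the principal obstacle to be this synchronization rather than the final assembly: the two factors must be forced onto identical indices, and it is exactly the finite-union (IP) structure delivered by the union-subsystem lemma, as opposed to the single index set produced by the first lemma, that guarantees $\Theta_A\cap\Theta_B\neq\emptyset$. The other point demanding care is the projection argument of the second paragraph, since it is what legitimizes transferring the $\mathcal{H}$-good family to genuinely $\mathcal{F}$-good and $\mathcal{G}$-good families in the factors.
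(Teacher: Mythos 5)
Your proposal is correct and follows essentially the same route as the paper's proof: project the $\mathcal{H}$-good family onto the two coordinates, verify via $\pi_S(ZFP(\Phi))=ZFP(F_1)$ that the projections are $\mathcal{F}$-good and $\mathcal{G}$-good, synchronize the index set using the union-subsystem lemma, and reassemble the witnesses as $c_i=(a_i,b_i)$. Your double application of Lemma 4.2, yielding $FU\left(\langle G_n^{(2)}\rangle\right)\subseteq\Theta_A\cap\Theta_B$, is just a harmless variant of (and in fact spells out in more detail) the synchronization step the paper delegates to ``the same argument as in \cite{key-12}.''
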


\begin{proof}
Let $H\in\mathcal{H}$, then there exists $F\in\mathcal{F}$ and $G\in\mathcal{G}$
such that $F\times G\subseteq H$. 

By definition we know that if $\Gamma$ is an $\mathcal{F}-J$ set,
then for any $\mathcal{F}$ good map say $\Theta\in\mathcal{P}_{f}\left(\,^{\mathbb{N}}S\right)$,
there exists $a\in S^{m+1}$ and $\left\{ h\left(1\right),h\left(2\right),\cdots,h\left(m\right)\right\} _{<}\subset\mathbb{N}$
such that 
\[
a\left(1\right)f\left(h\left(1\right)\right)a\left(2\right)f\left(h\left(2\right)\right)\ldots a\left(m\right)f\left(h\left(m\right)\right)a\left(m+1\right)\in\Gamma
\]
for all $f\in\Theta$, also since $\Theta$ is $\mathcal{F}$ good,
for $F\in\mathcal{F}$ there exists $k\in\mathbb{N}$ such that $ZFP_{k}\left(\Theta\right)\subseteq F$. 

We will start with $\mathcal{H}$ good map $\tilde{H}\in\mathcal{P}_{f}\left(^{\mathbb{N}}S\times T\right)$.
This defines collections 
\[
\hat{F}=\left\{ \begin{array}{c}
f\in\mathcal{P}_{f}\left(^{\mathbb{N}}S\right):f=\pi_{1}\circ h\,\text{where\,}h\in\tilde{H}\,\\
\text{and\,\ensuremath{\pi_{1}} is the projection on the first co-ordinate}
\end{array}\right\} 
\]
 and 
\[
\hat{G}=\left\{ \begin{array}{c}
g\in\mathcal{P}_{f}\left(^{\mathbb{N}}T\right):g=\pi_{2}\circ h,\,\text{where\,}h\in\tilde{H}\,\\
\text{and\,\ensuremath{\pi_{2}} is the projection on the second co-ordinate}
\end{array}\right\} .
\]
 We take these collections as our $\mathcal{F}$ good and $\mathcal{G}$
good maps to work with. 

So, $A$ being an $\mathcal{F}$\emph{-J} set and for $\hat{F}\in\mathcal{P}_{f}\left(\,^{\mathbb{N}}S\right)$
we get desired $a\in S^{m+1}$ and $\left\{ k\left(1\right),k\left(2\right),\cdots,k\left(m\right)\right\} _{<}\subset\mathbb{N}$
so that 
\[
a\left(1\right)f\left(k\left(1\right)\right)a\left(2\right)f\left(k\left(2\right)\right)\ldots a\left(m\right)f\left(k\left(m\right)\right)a\left(m+1\right)\in A.
\]

for all $f\in\hat{F}$ and for the $F\in\mathcal{F}$ opted at the
start of the theorem, there exists $k_{1}\in\mathbb{N}$ such that
$ZFP_{k_{1}}\left(\hat{F}\right)\subseteq F$.

Similarly for $B$, being an $\mathcal{G}$\emph{-J} set and for $\hat{G}\in\mathcal{P}_{f}\left(\,^{\mathbb{N}}T\right)$
we get $b\in T^{m+1}$ and $\left\{ \ell\left(1\right),\ell\left(2\right),\cdots,\ell\left(m\right)\right\} _{<}\subset\mathbb{N}$
so that 
\[
a\left(1\right)f\left(\ell\left(1\right)\right)a\left(2\right)f\left(\ell\left(2\right)\right)\ldots a\left(m\right)f\left(\ell\left(m\right)\right)a\left(m+1\right)\in B.
\]

for all $g\in\hat{G}$ and for given $G\in\mathcal{G}$, there exists
$k_{2}\in\mathbb{N}$ such that $ZFP_{k_{2}}\left(\hat{G}\right)\subseteq G$.

So to start with, for $H\in\mathcal{H}$, take $k=min\left\{ k_{1},k_{2}\right\} \in\mathbb{N}$
such that $ZFP_{k}\left(\tilde{H}\right)=ZFP_{k}\left(\hat{F},\hat{G}\right)\subseteq F\times G\subseteq H$. 

Considering $\hat{F}\in\mathcal{P}_{f}\left(\,^{\mathbb{N}}S\right)$
through lemma 4.5 a union subsystem can be picked, say $\left\langle D_{n}\right\rangle _{n=1}^{\infty}$
of $\left\langle \left\{ n\right\} \right\rangle _{n=1}^{\infty}$
so that 
\[
FU\left(\left\langle D_{n}\right\rangle _{n=1}^{\infty}\right)\subseteq\left\{ \begin{array}{c}
M\in\mathcal{P}_{f}\left(\mathbb{N}\right):M=\left\{ \tau\left(1\right),\tau\left(2\right),\ldots,\tau\left(m\right)\right\} _{<}\\
\text{and\,there\,is}\,\text{a\,\,}a\in S^{m+1}\,\text{\,so\,that\,for\,all\,\,}h\in\tilde{H}\\
a\left(1\right)f\left(\tau\left(1\right)\right)a\left(2\right)f\left(\tau\left(2\right)\right)\ldots a\left(m\right)f\left(\tau\left(m\right)\right)a\left(m+1\right)\in A
\end{array}\right\} .
\]
 Using lemma 4.3 again a $H\in\mathcal{P}_{f}\left(\mathbb{N}\right)$
can be chosen for which 
\[
\bigcup_{n\in H}D_{n}\in\left\{ \begin{array}{c}
M\in\mathcal{P}_{f}\left(\mathbb{N}\right):M=\left\{ \tau\left(1\right),\tau\left(2\right),\ldots,\tau\left(m\right)\right\} _{<}\\
\text{and\,there\,exists}\,\text{a\,\,}b\in T^{m+1}\,\text{so\,that\,for\,all\,\,}h\in\tilde{H}\\
b\left(1\right)g\left(\tau\left(1\right)\right)b\left(2\right)g\left(\tau\left(2\right)\right)\ldots b\left(m\right)g\left(\tau\left(m\right)\right)b\left(m+1\right)\in B
\end{array}\right\} .
\]
 Choose $M$ to be $\bigcup_{n\in H}D_{n}$, so that $M=\left\{ \tau\left(1\right),\tau\left(2\right),\ldots,\tau\left(m\right)\right\} _{<}$,
then we can pick $a\in S^{m+1}$ and $b\in T^{m+1}$ so that for all
$h\in\tilde{H}$, we simultaneously have
\[
a\left(1\right)f\left(\tau\left(1\right)\right)a\left(2\right)f\left(\tau\left(2\right)\right)\ldots a\left(m\right)f\left(\tau\left(m\right)\right)a\left(m+1\right)\in A
\]
 and 
\[
b\left(1\right)g\left(\tau\left(1\right)\right)b\left(2\right)g\left(\tau\left(2\right)\right)\ldots b\left(m\right)g\left(\tau\left(m\right)\right)b\left(m+1\right)\in B.
\]
 Defining $\lambda\left(i\right)=\left(a\left(i\right),b\left(i\right)\right)$,
we have $\lambda\in\left(S\times T\right)^{m+1}$, so for all $h\in\tilde{H}$,
\[
\lambda\left(1\right)h\left(\tau\left(1\right)\right)\lambda\left(2\right)h\left(\tau\left(2\right)\right)\ldots\lambda\left(m\right)h\left(\tau\left(m\right)\right)\lambda\left(m+1\right)\in A\times B.
\]
\end{proof}

\section{Product of some other combinatorially large sets}

In \cite{key-11} Hindman and Strauss showed that the product of two
central sets is again central. Here we want to prove that product
of two strongly central sets is again strongly central. The proof
uses algebra of Stone-\v{C}ech compactification and is the following.
\begin{thm}
Let $S$ and $T$ be two semigroups. Let $A$ be a strongly central
set in $S$ and $B$ be a strongly central set in $T$. Then the cartesian
product $A\times B$ is strongly central set in $S\times T$. 
\end{thm}

\begin{proof}
Let $\tilde{\iota}^{-1}\left(K\left(\beta S\right)\times K\left(\beta T\right)\right)=R$
where $\tilde{\iota}$ is the continuous extension of the identity
function. Then $R$ is a closed subsemigroup of $\beta\left(S\times T\right)$
and $K\left(R\right)=K\left(\beta\left(S\times T\right)\right)\cap R$
by Theorem 1.65 of \cite{key-12}. Let $L$ be a minimal left ideal
of $\beta\left(S\times T\right)$, then $N=L\cap R$ is a minimal
left ideal of $R$. So $\tilde{\iota}\left(N\right)$ is a minimal
left ideal of $\beta S\times\beta T$. Let $L_{1}=\pi_{1}\left(\tilde{\iota}\left(N\right)\right)$
and $L_{2}=\pi_{2}\left(\tilde{\iota}\left(N\right)\right)$ respectively
be the coordinate wise projections. Then $L_{1}$ is a minimal left
ideal of $\beta S$ and $L_{2}$ is a minimal left ideal of $\beta T$.
Since $A$ is strongly central in $S$ and $B$ is strongly central
in $T$, there exists $p\in L_{1}\cap\bar{A}$ and $q\in L_{2}\cap\bar{B}$.
Denote $M=\tilde{\iota}^{-1}\left[\left\{ \left(p,q\right)\right\} \right]$
and by Theorem 4.43.1 of \cite{key-12} pick an idempotent $r\in K\left(M\right)$
such that $\tilde{\iota}\left(r\right)=\left(p,q\right)$. Then $A\times B\in r$
since $A\in p$ and $B\in q$. Also $r\in N\subseteq L$. Therefore
$r\in L\cap\overline{\left(A\times B\right)}$ which concludes the
proof. 
\end{proof}
There is another alternative method of proving the previous theorem,
with the help of the following combinatorial characterization of right
strongly central sets, shown in \cite{key-4-1}.
\begin{lem}
Let $S$ be a semigroup and $A\subset S$. Then $A$ is right strongly
Central if and only if whenever family $\mathcal{A}$ of subsets of
$S$ with right thick finite intersection property, there exists a
downward directed family $\left\langle C_{F}\right\rangle _{F\in I}$
of subsets of $A$ such that

(i) for each $F\in I$ and each $x\in C_{F}$ there exists $G\in I$
with $C_{G}\subseteq x^{-1}C_{F}$ and

(ii) $\mathcal{A}\cup\left\{ C_{F}:F\in I\right\} $ has the finite
intersection property.
\end{lem}

\begin{proof}
See \cite[Theorem 2.6]{key-4-1}.
\end{proof}
We quickly prove another small lemma, which is the following.
\begin{lem}
Let $S$ and $T$ be a semigroup and $C\subseteq S\times T$ be a
thick set. Then $A=\pi_{1}\left(C\right)$ and $B=\pi_{2}\left(C\right)$
are thick sets in $S$ and $T$ respectively.
\end{lem}

\begin{proof}
Without loss of generality , let $A$ is not thick in $S$. Then there
exists a finite subset $H_{1}$ of $S$ such that for any $x\in S$,
$H_{1}\cdot x$ is not contained in $A$. Now, for any finite subset
$H_{2}$ of $T$, 
\[
H_{1}\times H_{2}\in P_{f}\left(S\times T\right)
\]
 and for any $\left(x,y\right)\in S\times T$, $\left(H_{1}\times H_{2}\right)\left(x,y\right)$
is not contains in $C$, a contradiction to the fact that $C$ is
thick.

So, $A=\pi_{1}\left(C\right)$ and $B=\pi_{2}\left(C\right)$ are
thick sets in $S$ and $T$ respectively.
\end{proof}
We now give a definiton which was first introduced in \cite{key-4-1}
which is a special type of finite intersection property and this will
be used in the next two theorems.
\begin{defn}
\textit{\emph{Let $\mathcal{A}$ be a family of subsets of a semigroup
$\left(S,+\right)$. Then $A$ has the }}\textit{thick finite intersection
property}\textit{\emph{ if and only if any intersection of finitely
many members of $\mathcal{A}$ is thick.}}\textit{ }
\end{defn}

Now we prove the following.
\begin{thm}
Let $S$ and $T$ be semigroups and $A\subset S,B\subset T$ be right
strongly Central sets in $S$ and $T$ respectively. Then $A\times B$
is right strongly Central set in $S\times T$.
\end{thm}

\begin{proof}
Let, $\mathcal{C}$ be a family of subsets of $S\times T$ with thick
finite intersection property. Let us consider $\mathcal{A}=\left\{ \pi_{1}\left(C\right):C\in\mathcal{C}\right\} $and
$\mathcal{B}=\left\{ \pi_{2}\left(C\right):C\in\mathcal{C}\right\} $
be two families of subsets of $S$ and $T$ respectively.

Both of them has thick finite intersection property by the previous
lemma and the fact that finite intersection of thick sets is thick.

Now $A$ is right strongly central set in $S$, then for the family
$\mathcal{A}$ of subsets of $S$ with right thick finite intersection
property, there exists a downward directed family $\left\langle C_{F}\right\rangle _{F\in I}$
of subsets of $S$ such that 

(i) for each $F\in I$ and each $x\in C_{F}$ there exists $G\in I$
with $C_{G}\subseteq x^{-1}C_{F}$ and

(ii) $\mathcal{A}\cup\left\{ C_{F}:F\in I\right\} $ has the finite
intersection property. 

Also , $B$ is right strongly central set in $T$, then for the family
$\mathcal{B}$ of subsets of $T$ with right thick finite intersection
property, there exists a downward directed family $\left\langle D_{F}\right\rangle _{F\in I}$
of subsets of $T$ such that 

(i) for each $F\in I$ and each $y\in D_{F}$ there exists $G\in I$
with $D_{G}\subseteq y^{-1}D_{F}$ and

(ii) $\mathcal{A}\cup\left\{ D_{F}:F\in I\right\} $ has the finite
intersection property. 

Then $\left\langle C_{F},D_{F}\right\rangle _{F\in I}$ is the downward
directed family of $\left(A\times B\right)$ and 

(i) for each $F\in I$ and each $\left(x,y\right)\in E_{F}$ there
exists $G\in I$ with $E_{G}\subseteq\left(x^{-1},y^{-1}\right)E_{F}=\left(x,y\right)^{-1}E_{F}$
and

(ii) $\left(\mathcal{A}\times\mathcal{B}\right)\cup\left\{ E_{F}:F\in I\right\} $
has the finite intersection property. 

So $\mathcal{C}\cup\left\{ E_{F}:F\in I\right\} $ has the finite
intersection property. 
\end{proof}
Another notion, called right thickly central was also defined in \cite{key-4-1}
where the authors characterised the sets combinatorially. Here we
show that product of two such sets is again a set of this kind, using
the theorem proved by the authors in their paper, which is the following.
\begin{thm}
Let $S$ be a semigroup and $A\subset S$. Then $A$ is right thickly
Central if and only if there is a family $\mathcal{A}$ of subsets
of $S$ with right thick finite intersection property, and for any
downward directed family $\left\langle C_{F}\right\rangle _{F\in I}$
of subsets of $S\setminus A$ such that

(i) for each $F\in I$ and each $x\in C_{F}$ there exists $G\in I$
with $C_{G}\subseteq x^{-1}C_{F}$ and

(ii) $\mathcal{A}\cup\left\{ C_{F}:F\in I\right\} $ does not have
finite intersection property.
\end{thm}

\begin{proof}
\cite[Theorem 2.3]{key-4-1}.
\end{proof}
Hence we have the following one.
\begin{thm}
Let $S$ and $T$ be two semigroups. Let $A$ be a right thickly central
set in $S$ and $B$ be a right thickly central set in $T$. Then
the cartesian product $A\times B$ is right thickly central set in
$S\times T$. 
\end{thm}

\begin{proof}
Let $\left\langle E_{F}\right\rangle _{F\in I}$ be a downward directed
family of $\left(S\times T\right)\setminus\left(A\times B\right)$
and let for any $F\in I$, $C_{F}=\pi_{1}\left(E_{F}\right)$ and
$D_{F}=\pi_{2}\left(E_{F}\right)$ are subsets of $S\setminus A$
and $T\setminus B$ respectively. 

$A$ is right thickly central set in $S$, so there is a family $\mathcal{A}$
of subsets of $S$ with right thick finite intersection property.
Then for the downward directed family $\left\langle C_{F}\right\rangle _{F\in I}$
of subsets of $S\setminus A$ we have 

(i) for each $F\in I$ and each $x\in C_{F}$ there exists $G_{1}\in I$
with $C_{G_{1}}\subseteq x^{-1}C_{F}$ and

(ii) $\mathcal{A}\cup\left\{ C_{F}:F\in I\right\} $ does not have
finite intersection property. 

Also , $B$ is right thickly central set in $T$, so there is a family
$\mathcal{B}$ of subsets of $T$ with right thick finite intersection
property, and hence there exists a downward directed family $\left\langle D_{F}\right\rangle _{F\in I}$
of subsets of $T\setminus B$ such that 

(i) for each $F\in I$ and each $y\in D_{F}$ there exists $G_{2}\in I$
with $D_{G_{2}}\subseteq y^{-1}D_{F}$ and

(ii) $\mathcal{B}\cup\left\{ D_{F}:F\in I\right\} $ does not have
finite intersection property. 

We want to show $\mathcal{A}\times\mathcal{B}$ is the required family
of subsets of $S\times T$. We have that

(i) for each $F\in I$ and each $\left(x,y\right)\in E_{F}$ there
exists $G\in I$ where $G=max\left(G_{1},G_{2}\right)$ with $E_{G}\subseteq\left(x^{-1},y^{-1}\right)E_{F}=\left(x,y\right)^{-1}E_{F}$
and

(ii) $\left(\mathcal{A}\times\mathcal{B}\right)\cup\left\{ E_{F}:F\in I\right\} $
does not have finite intersection property. 

Hence we have the required result.
\end{proof}
Another alternative way is there to prove the last fact, which we
mention as a passing argument.

Since $A\subset S,B\subset T$ are right thickly central sets respectively
in $S$ and $T$, then $A^{c}\subset S,B^{c}\subset T$ are not right
strongly central sets. So $A^{c}\times B^{c}=\left(A\times B\right)^{c}$
is right strongly central set in $S\times T$. Therefore $A\times B$
is right thickly Central set in $S\times T$.

$\vspace{0.3in}$

\textbf{Acknowledgment:} The second author acknowledges the Grant
CSIR-UGC NET fellowship with file No. 09/106(0199)/2019-EMR-I.

$\vspace{0.3in}$


\begin{thebibliography}{DHS08}
\bibitem[BPS]{key-1}{\normalsize{} }L. Baglini, S. Patra, Md M. Shaikh,
Dynamical notions along filters, New York J. Math 29 (2023), 792-817.

\bibitem[BH]{key-2} V. Bergelson, N. Hindman, Nonmetrizable topological
dynamics and Ramsey Theory, Trans. Amer. Math. Soc. 320 (1990), 293-320.

\bibitem[BHS]{key-1-1}\emph{ }Vitaly Bergelson, Neil Hindman, and
Dona Strauss\emph{, }Strongly Central sets and sets of polynomial
returns mod 1\emph{, }Proc. Amer. Math. Soc. 140 (2012), 2671-2686.

\bibitem[CJ]{key-2-1} C. Christopherson and J. H. Johnson Jr\emph{,
}Algebraic characterizations of some relative notions of size\emph{,}
Semigroup Forum 104 (2022), 28-44.

\bibitem[DHS08]{key-3} D. De, N. Hindman and D. Strauss, A new and
stronger Central Sets Theorem, Fundamenta Mathematicae 199 (2008),
155-175. 

\bibitem[DHS09]{key-4} D. De, N. Hindman and D. Strauss, Sets central
with respect to certain subsemigroups of \textgreek{\textbeta}Sd,
Topology Proceedings, 33 (2009), 55-79.

\bibitem[E]{key-3-1} R. Ellis, Lectures on topological dynamics (W.
A. Benjamin New York, 1969).

\bibitem[F]{key-5} Furstenberg, H., Recurrence in ergodic theory
and combinatorial number theory, vol. 2, Princeton University Press,
Princeton, New Jersey, 1981.

\bibitem[G]{key-6} S. Goswami, Cartesian products of some combinatorially
rich sets, INTEGERS 20 (2020), \#A64

\bibitem[GP]{key-7} Goswami, Sayan ; Poddar, Jyotirmoy. Central sets
theorem along filters and some combinatorial consequences, Indagationes
Mathematicae 33 (2022), 1312-1325.

\bibitem[HJS]{key-4-1} N. Hindman, L. L. Jones, D. Strauss, The relationships
among many notions of largeness for subsets of a semigroup, Semigroup
Forum 99 (2019), 9-31.

\bibitem[HMS]{key-8} N. Hindman, A. Maleki and D. Strauss, Central
sets and their combinatorial characterization, J. Comb. Theory (Series
A) 74 (1996)

\bibitem[HS]{key-9} N. Hindman, D. Strauss, A simple characterization
of sets satisfying the Central Sets Theorem, New York J. Math. 15
(2009), 405-413.

\bibitem[HS09]{key-10} N. Hindman, D. Strauss, Sets satisfying the
Central Sets Theorem, Semigroup Forum 79 (2009), 480-506.

\bibitem[HS10]{key-11} N. Hindman, D. Strauss, Cartesian products
of sets satisfying the Central Sets Theorem, Topology Proceedings
35 (2010), 203-223.

\bibitem[HS12]{key-12} N.Hindman and D.Strauss, Algebra in the Stone-\v{C}ech
Compactifications: theory and applications, second edition, de Gruyter,
Berlin, 2012.

\bibitem[S]{key-13} I. Schur, Uber die Kongruenz $x^{m}+y^{m}=z^{m}$(mod
$p$), Jahresbericht der Deutschen Math.-Verein. 25 (1916), 114-117.

\bibitem[SZZ]{key-14} O. Shuungula,Y. Zelenyuk and Y. Zelenyuk, The
closure of the smallest ideal of an ultrafilter semigroup, Semigroup
Forum 79 (2009) , 531-539.

\bibitem[vdW]{key-15} B. van der Waerden, Beweis einer Baudetschen
Vermutung, Nieuw Arch. Wiskunde 19 (1927), 212-216.

\end{thebibliography}
\end{document}